\documentclass[11pt, reqno]{amsart}
\usepackage{amssymb,latexsym,amsmath,amsfonts}
\usepackage{latexsym}
\usepackage[mathscr]{eucal}

\voffset = -18pt
\hoffset = -27pt
\textwidth = 5.6in
\textheight = 8.7in
\numberwithin{equation}{section}

\theoremstyle{definition}
\newtheorem{definition}{Definition}[section]

\theoremstyle{remark}
\newtheorem{remark}[definition]{Remark}
\theoremstyle{plain}
\newtheorem{theorem}[definition]{Theorem}
\newtheorem{lemma}[definition]{Lemma}

\newtheorem{result}[definition]{Result}

\newcommand{\eps}{\varepsilon}

\newcommand{\zbar}{\overline{z}}

\newcommand{\tht}{\theta}
\newcommand{\zahl}{\mathbb{Z}}

\newcommand{\bdy}{\partial}
\newcommand{\OM}{\Omega}

\newcommand{\Dsc}{\mathbb{D}}

\newcommand{\dom}{\mathcal{D}}

\newcommand{\smoo}{\mathcal{C}}
\newcommand{\hol}{\mathcal{O}}
\newcommand{\poly}{\mathscr{P}}

\newcommand{\bcdot}{\boldsymbol{\cdot}}
\newcommand{\rl}{{\sf Re}}

\newcommand{\impl}{\Longrightarrow}
\newcommand{\mapp}{\longmapsto}

\newcommand{\lrarw}{\longrightarrow}
\newcommand\partlD[2]{\partial_{{#1}}^{{#2}}}
\newcommand\ba[1]{\overline{#1}}
\newcommand\hull[1]{\widehat{#1}}

\newcommand{\leb}{\mathfrak{m}}

\newcommand{\CC}{\mathbb{C}^2}
\newcommand{\cplx}{\mathbb{C}}


\newcommand{\Gr}{{\sf Gr}}

\begin{document}

\title[Uniform algebras]{Uniform algebras generated by holomorphic \\
and close-to-harmonic functions}

\author{Gautam Bharali}
\address{Department of Mathematics, Indian Institute of Science, Bangalore -- 560012}
\email{bharali@math.iisc.ernet.in}
\author{Sushil Gorai}
\address{Department of Mathematics, Indian Institute of Science, Bangalore -- 560012}
\email{sushil@math.iisc.ernet.in}

\thanks{GB is supported by the DST via the Fast Track grant SR/FTP/MS-12/2007.
SG is supported by CSIR-UGC fellowship 09/079(2063). Support is also provided
by the UGC under DSA-SAP, Phase~IV}

\keywords{Harmonic function, plurisubharmonic function, polynomially convex}
\subjclass[2000]{Primary: 30E10, 32E20, 32U05, 46J15}

\begin{abstract}
The initial motivation for this paper is to discuss a more concrete approach to an approximation theorem
of Axler and Shields, which says that the uniform algebra on the closed unit disc $\ba{\Dsc}$ generated
by $z$ and $h$ --- where $h$ is a nowhere-holomorphic harmonic function on $\Dsc$ that is continuous up
to $\bdy{\Dsc}$ --- equals $\smoo(\ba{\Dsc})$. The abstract tools used by Axler and Shields make 
harmonicity of $h$ an essential condition for their result. We use the concepts of
plurisubharmonicity and polynomial convexity to show that, in fact, the same conclusion is reached
if $h$ is replaced by $h+R$, where $R$ is a non-harmonic perturbation whose Laplacian is ``small'' in
a certain sense. 
\end{abstract} 
\maketitle

\section{Introduction and Statement of Results}\label{S:intro}

This paper is motivated by the following result of Axler and Shields \cite{AS} (in what follows, 
$\Dsc$ will denote the open unit disc in $\cplx$ centered at the origin):

\begin{result}[\cite{AS}, Theorem~4]\label{R:axlershields}
Let $h$ be a function in $\smoo(\ba{\Dsc})$ that is harmonic but nowhere holomorphic
on $\Dsc$. Then, $[z,h]_{\ba{\Dsc}} = \smoo(\ba{\Dsc})$.
\end{result}

\noindent{Recall that $[z,h]_{\ba{\Dsc}}$ denotes the uniform algebra on $\ba{\Dsc}$ generated by $z$ 
and $h$. Axler and Shields use results that are abstract and of extremely general scope --- such as
the Bishop Antisymmetric Decomposition --- to deduce their theorems. Harmonicity plays a very
central role in their approach, and it is difficult to answer even this simple question: {\em to
what extent can we allow harmonicity to fail, by adding a small perturbation $R$
to $h$, and yet recover the conclusion of Result~\ref{R:axlershields} with $h+R$ replacing $h$~?}}
\smallskip

Axler and Shields themselves imply that they tried to prove Result~\ref{R:axlershields} without 
the use of their deep result on the ${L}^\infty(\Dsc)$-subalgebra $H^\infty(\Dsc)[h]$ (which is where 
harmonicity plays a key role) but to no avail (see page~636 of \cite{AS} for their statement).
Hence, there is an interest in a more explicit approach even to Result~\ref{R:axlershields}.
\smallskip

We are able, using plurisubharmonic functions and polynomial convexity in a simple way, to prove 
an Axler-Shields-type result which states that $[z,h+R]_{\ba{\Dsc}}= \smoo(\ba{\Dsc})$, where $R$ is 
a small --- in an appropriate sense --- non-harmonic perturbation. Furthermore, taking $R=0$ in
our result reproduces the conclusion of Result~\ref{R:axlershields}, thus providing a different
approach to the Axler-Shields theorem.
\smallskip

\noindent{The central result of this article is:}

\begin{theorem}\label{T:genlaxler-shields}
Let $h:\ba{\Dsc}\lrarw\cplx$ be a function that is harmonic on $\Dsc$ and belongs to 
$\smoo(\ba{\Dsc})$. Let $R\in \smoo^2(\Dsc)\cap \smoo(\ba{\Dsc})$, and suppose $R$ is a non-harmonic 
perturbation of $h$ that is small in the following sense:
\begin{itemize}
\item[$a)$] The set $\{z\in\Dsc:\partial_{\zbar}(h+R)(z)=0\}$ has zero Lebesgue measure; and
\item[$b)$] The Laplacian of $R$ has the bound
\begin{equation}\label{E:cond2}
 |\Delta R(z)| \ \leq \ C~\frac{|\partial_{\zbar}(h+R)(z)|^2}{\sup_{\ba{\Dsc}}|h+R|}
 \;\; \forall z\in \Dsc,
\end{equation}
for some constant $C\in(0,1)$.
\end{itemize}
Then, $[z,h+R]_{\ba{\Dsc}}= \smoo(\ba{\Dsc})$.
\end{theorem}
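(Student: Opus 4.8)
The plan is to transfer the problem to the graph of $h+R$ and to split it into a polynomial‑convexity statement and a polynomial‑approximation statement. Write $\phi := h+R\in\smoo^2(\Dsc)\cap\smoo(\ba{\Dsc})$ and $M:=\sup_{\ba{\Dsc}}|\phi|$ (note $M>0$, since $\phi\equiv 0$ would contradict $a)$), and set $X:=\{(z,\phi(z)):z\in\ba{\Dsc}\}\subset\CC$. The map $z\mapsto(z,\phi(z))$ is a homeomorphism of $\ba{\Dsc}$ onto $X$ carrying the coordinate functions $z,w$ to $z,\phi$, so $[z,h+R]_{\ba{\Dsc}}$ is isometrically isomorphic to $\poly(X)$, the uniform algebra generated by the coordinate functions on $X$. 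It therefore suffices to prove that $X$ is polynomially convex and that $\poly(X)=\smoo(X)$.

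The crucial step is the polynomial convexity of $X$, and this is where plurisubharmonicity and condition $b)$ do the work. Take $u(z,w):=|w-\phi(z)|^2$ as a candidate defining function. A direct computation of the complex Hessian of $u$, using that $h$ is harmonic so that $\partial_z\partial_{\zbar}\phi=\tfrac{1}{4}\Delta R$, gives at any $(z,w)$ with $z\in\Dsc$
\[
\det\begin{pmatrix}\partial_z\partial_{\zbar}u & \partial_z\partial_{\ba{w}}u\\[2pt]\partial_w\partial_{\zbar}u & \partial_w\partial_{\ba{w}}u\end{pmatrix}\ =\ |\partial_{\zbar}\phi(z)|^2\ -\ 2\,\rl\big(\overline{\partial_z\partial_{\zbar}\phi(z)}\,(w-\phi(z))\big),
\]
with $\partial_w\partial_{\ba{w}}u\equiv 1$; so $u$ is plurisubharmonic at $(z,w)$ exactly when the right–hand side is nonnegative. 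Bounding the last term by $\tfrac{1}{2}|\Delta R(z)|\,|w-\phi(z)|$ and using \eqref{E:cond2}, the right–hand side is at least $|\partial_{\zbar}\phi(z)|^2\big(1-\tfrac{C}{2M}|w-\phi(z)|\big)$, which is positive whenever $\partial_{\zbar}\phi(z)\neq 0$ and $|w-\phi(z)|<2M/C$. Since $C\in(0,1)$ we have $|w-\phi(z)|\leq 2M<2M/C$ on $\ba{\Dsc}\times\{|w|\leq M\}$ — this is exactly where the strict inequality $C<1$ provides the needed room on the ``vertical'' scale $M$ — so $u$ is plurisubharmonic on an open set $\OM_0$ containing $\Dsc\times\{|w|\leq M\}$. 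As the plurisubharmonic functions $|z|^2$ and $|w|^2$ already force $\hull{X}\subset\ba{\Dsc}\times\{|w|\leq M\}$, the maximum principle for plurisubharmonic functions on polynomial hulls gives $u\equiv 0$ on $\hull{X}$, hence $\hull{X}=X$ — once matters have been arranged so that this maximum principle can legitimately be applied near all of $\hull{X}$ (see below).

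For $\poly(X)=\smoo(X)$ one combines polynomial convexity with condition $a)$. The graph $X$ is totally real at $(z,\phi(z))$ precisely when $\partial_{\zbar}\phi(z)\neq 0$, and by $a)$ the set $E:=\{z\in\Dsc:\partial_{\zbar}\phi(z)=0\}$ has planar measure zero; thus $X_0:=\{(z,\phi(z)):z\in E\cup\bdy\Dsc\}$ is a closed subset of $X$ whose complement $X\setminus X_0=\{(z,\phi(z)):z\in\Dsc\setminus E\}$ is a totally real $\smoo^2$ submanifold of $\CC$. An O'Farrell–Preskenis–Walsh type theorem then reduces the approximation problem to the exceptional set: for the polynomially convex $X$ one has $\poly(X)=\{g\in\smoo(X):g|_{X_0}\in\poly(X_0)\}$, so $\poly(X)=\smoo(X)$ provided $\poly(X_0)=\smoo(X_0)$, and the latter is extracted from the fact that $\pi_1(X_0)=E\cup\bdy\Dsc$ has planar measure zero (so, in particular, nothing analytic sits over $\bdy\Dsc$, again by $a)$) together with the structure theory of polynomial hulls. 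Finally, taking $R\equiv 0$ makes $b)$ vacuous and makes $a)$ equivalent, for harmonic $h$, to $h$ being nowhere holomorphic on $\Dsc$; so the argument also reproduces Result~\ref{R:axlershields}.

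I expect the main obstacle to lie at $\bdy\Dsc$. Since $\phi$ is of class $\smoo^2$ only on the \emph{open} disc and merely continuous up to the boundary, the plurisubharmonic function $u$ is available only over $\Dsc$, and one cannot apply the maximum principle to $\hull{X}$ directly; one must instead argue through an exhaustion of $\ba{\Dsc}$ by closed subdiscs $\{|z|\leq r\}$ — on which $u$ is plurisubharmonic on a neighbourhood of the relevant hull — together with a careful use of the local maximum principle to control $\hull{X}$ over the circle, and a separate argument that the graph over $\bdy\Dsc$ supports no analytic structure. These points, together with the analysis of the exceptional set $X_0$ in the approximation step, all hinge on condition $a)$: by the plurisubharmonic estimate above, any analytic disc in $\hull{X}$ would have to lie inside $X$ itself, which is impossible because $X$ is totally real almost everywhere.
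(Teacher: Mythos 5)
Your Levi-form computation for $u(z,w)=|w-\phi(z)|^2$ is correct and is precisely the H\"ormander--Wermer device the paper also uses, but both halves of your argument run aground over $\bdy\Dsc$, and the repairs you sketch do not close the gaps. For polynomial convexity: since $\phi$ is of class $\smoo^2$ only on the open disc, $u$ is plurisubharmonic only over $\Dsc$, as you note; but exhausting by the subdiscs $\{|z|\le r\}$ only yields polynomial convexity of $\Gr_{\{|z|\le r\}}(\phi)$ for each $r<1$, and an increasing union of polynomially convex sets need not have polynomially convex closure. The local maximum principle merely pushes a putative point of $\hull{X}\setminus X$ out to $\hull{X}\cap(\bdy\Dsc\times\cplx)$, and controlling $\hull{\Gr_{\bdy\Dsc}(\phi)}$ is essentially the original problem (if $\phi|_{\bdy\Dsc}$ extended holomorphically, that hull would be the graph of the extension over $\ba{\Dsc}$). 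The paper's fix is a different limiting process: dilate the function rather than shrink the disc, i.e.\ set $\psi_r(z,w)=|w-\phi(rz)|^2$, which is plurisubharmonic on a \emph{Runge} polydisc whose first factor contains $\ba{\Dsc}$; a point $p\notin\Gr_{\ba{\Dsc}}(\phi)$ is then separated from the graph by $\psi_{r}$ for $r$ close to $1$, by uniform continuity of $\phi$. Some such device is needed; the subdisc exhaustion as described does not suffice. (Also note that ruling out analytic discs in $\hull{X}$ does not rule out extra points of the hull.)

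The more serious gap is in the approximation step. After the O'Farrell--Preskenis--Walsh reduction you must prove $\poly(X_0)=\smoo(X_0)$ for $X_0=\Gr_{\ba{E}\cup\bdy\Dsc}(\phi)$, and the appeal to ``$\pi_1(X_0)$ has planar measure zero'' plus ``structure theory of polynomial hulls'' does not deliver this. The portion of $X_0$ over $\bdy\Dsc$ is the graph of a merely continuous function on the circle --- a possibly non-rectifiable curve, which can even have positive two-dimensional Hausdorff measure --- and polynomial convexity of a compact set does not in general imply $\poly=\smoo$ on it (thin polynomially convex sets with $\poly\neq\smoo$ exist). Nothing in hypotheses $(a)$, $(b)$ controls $\phi|_{\bdy\Dsc}$ beyond continuity. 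The reason hypothesis $(a)$ concerns planar Lebesgue measure in the $z$-plane is that the intended argument is Wermer's: for $\mu\perp[z,\phi]_{\ba{\Dsc}}$ one shows that the Cauchy transform $H_\mu(a)=\int(z-a)^{-1}\,d\mu(z)$ vanishes for all $a$ outside $\bdy\Dsc\cup E$ and a $\leb$-null set, whence $\mu=0$ by Bishop's lemma; this needs only that the exceptional set of points $a\in\cplx$ be $\leb$-null, and never requires analyzing the graph over the bad set as a subset of $\CC$. Replacing your OPW step by this Wermer--Bishop argument (Theorem~\ref{T:Wermer} in the paper) is what actually completes the proof.
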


\begin{remark}\label{R:recover}
Observe that we can recover Result~\ref{R:axlershields} from the above theorem. 
Firstly, $R\equiv 0$ certainly satisfies (b). Note, furthermore, that $\partlD{\zbar}{}h$ is 
anti-holomorphic on $\Dsc$. Its zeros in $\Dsc$ thus form a discrete subset of $\Dsc$, hence
a set of zero Lebesgue measure. All the hypotheses of Theorem~\ref{T:genlaxler-shields} are 
satisfied, and hence $[z,h]_{\ba{\Dsc}}= \smoo (\ba{\Dsc})$.
\end{remark}

Before proceeding to the proof, let us glance at the central ideas involved. The proof may be 
summarised as follows (in what follows, given a function $f$ and a set $S\subset{\sf dom}(f)$,
$\Gr_S(f)$ will denote the set ${\sf graph}(f)\cap(S\times\cplx)$, i.e. the portion of the graph 
of $f$ whose projection onto the first coordinate is $S$):
\begin{itemize}
\item We start with a construction that goes back to H{\"o}rmander and Wermer \cite{HW}: we define
the function
\[
 \qquad \psi_r(z,w):=|w-(h+R)(rz)|^2,
 \;\; (z,w) \in \ba{D(0;r^{-1})}\times\cplx, \; r\in(0,1), \notag
\]
which vanishes precisely on the graph of $(h+R)(r\bcdot)$. We use the condition \eqref{E:cond2}
to show that $\psi_r$ is plurisubharmonic in $\boldsymbol{\Delta}_r:=D(0;r^{-1})\times D(0;\rho)$,
where $\rho>0$ is large enough to contain the aforementioned graph.
\smallskip

\item From the last fact, and the fact that each $\boldsymbol{\Delta}_r$, $r\in (0,1)$, is Runge, we
realise that $\Gr_{\ba{\Dsc}}((h+R)(r\bcdot))$ is polynomially convex. But because
$(h+R)(r\bcdot)\lrarw (h+R)$ uniformly on $\ba{\Dsc}$ as $r\uparrow 1$, we deduce the same
for $\Gr_{\ba{\Dsc}}(h+R)$.
\smallskip

\item Knowing that $\Gr_{\ba{\Dsc}}(h+R)$ is polynomially convex, the first condition on $R$ allows
us to appeal to {\em a variation} on a theorem of Wermer \cite[Theorem 1]{W}. Wermer's original theorem
would have required us to demand that $h, R\in \smoo^1(\ba{\Dsc})$. However, with very slight 
modifications to Wermer's proof, we can appeal to the resulting theorem to infer that 
$[z,h+R]_{\ba{\Dsc}}= \smoo(\ba{\Dsc})$.
\end{itemize}
\smallskip

The main idea needed for the aforementioned variation on Wermer's theorem has been remarked upon
in \cite{W}.  However, it might be of interest to the reader to see the relevant lemmas carefully
restated to suit the present setting (i.e. with lower boundary regularity). Hence, we shall discuss
this variation in Section~\ref{S:technical}. The proof of Theorem~\ref{T:genlaxler-shields} will 
be presented in Section~\ref{S:genlaxler-shields}.
\smallskip

\noindent{{\bf{\em Added in proof:}} It was brought to our notice that stronger results subsuming
Result~\ref{R:axlershields} had been established by Chirka \cite{C} in 1969. However, there are 
gaps in the proofs of \cite[Theorem~4]{C} and \cite[Theorem~5]{C}, on which Chirka's results rely.
The most significant gap is the one in the proof of Theorem~4, which is false as stated. The proof
appears to presume that (in the notation of \cite[Theorem~4]{C}) $\widehat{K}_A$ is always
connected (assuming w.l.o.g. that $K$ is connected). In short: \cite{AS} has the earliest 
{\em complete} proof of Result~\ref{R:axlershields} that we are aware of. That said, we feel
that the {\em basic ideas} in \cite{C} could still (bypassing \cite[Theorem~4]{C} and
\cite[Theorem~5]{C} entirely) be made to work; thus recovering Chirka's results in $\CC$
(which are slightly stronger than ours).}
\medskip

\section{Technical Results}\label{S:technical} 

We begin this section with a technical, but essentially elementary, result. We must first explain
some notation. Given a domain $\OM\subset\cplx^d$ and a real-valued function $F\in\smoo^2(\OM)$, the
{\em Levi form of $F$ at $z$} --- denoted by $\mathscr{L}F(z;\bcdot)$ --- is the quadratic
form given by
\[
\mathscr{L}F(z;V) \ := \ 
\sum_{j,k=1}^d\frac{\partial^2F}{\partial z_j\partial\zbar_k}(z)v_j\overline{v}_k \quad
\forall V=(v_1,\dots,v_d)\in\cplx^d.
\] 
\smallskip

\begin{lemma} \label{L:leviform}
Let $\OM$ be a domain in $\cplx$ and let $f \in {\smoo}^2(\OM)$. Define the function $\psi(z,w):=|w-f(z)|^2, \
(z,w)\in \OM\times\cplx$. Then, for the Levi form
$\mathscr{L}\psi (z,w;V),~~ V=(V_1, V_2) \in \CC$, we have
\begin{equation} \label{E:leviform}
  \mathscr{L}\psi(z,w;V) \geq
\left(2\rl \left((\ba{f(z)-w})\partlD{z \zbar}{2}f(z)\right)+ |\partial_{\zbar}f(z)|^2 \right)|V_1|^2.
\end{equation}
In particular, if $f=h+R$, where $h \in{\sf harm}(\OM)$, we have
\begin{equation} \label{E:levi-harmonic}
\mathscr{L}\psi(z,w;V) \geq \left(|\partial_{\zbar}(h+R)(z)|^2 +
2\rl \left((\ba{h(z)+R(z)-w})\partlD{z \zbar}{2}R(z)\right)\right)|V_1|^2.
\end{equation}
\end{lemma}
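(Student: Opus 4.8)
The plan is to obtain \eqref{E:leviform} by a direct computation of the Levi form of $\psi$ and then to recognise that the ``error terms'' assemble into a perfect square. I would write $\psi(z,w)=|w|^2-w\,\ba{f(z)}-\ba{w}f(z)+|f(z)|^2$ and take $z_1=z$, $z_2=w$ as the holomorphic coordinates on $\OM\times\cplx$. Using the Wirtinger calculus on $\cplx$ --- in particular $\partial_z\ba{f}=\ba{\partial_\zbar f}$ and $\partlD{z \zbar}{2}\ba{f}=\ba{\partlD{z \zbar}{2}f}$ --- I would compute the four mixed second-order partials of $\psi$.

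A short calculation gives
\[
\partlD{z \zbar}{2}\psi(z,w)=2\rl\!\left((\ba{f(z)-w})\,\partlD{z \zbar}{2}f(z)\right)+|\partial_z f(z)|^2+|\partial_\zbar f(z)|^2,
\]
together with $\partial_z\partial_{\ba{w}}\psi=-\partial_z f(z)$, $\partial_w\partial_\zbar\psi=-\ba{\partial_z f(z)}$, and $\partial_w\partial_{\ba{w}}\psi=1$. Substituting these into the definition of $\mathscr{L}\psi(z,w;\bcdot)$ and combining the two cross terms into a single real part yields
\[
\mathscr{L}\psi(z,w;V)=\Big(2\rl\big((\ba{f(z)-w})\,\partlD{z \zbar}{2}f(z)\big)+|\partial_z f(z)|^2+|\partial_\zbar f(z)|^2\Big)|V_1|^2-2\rl\big(\partial_z f(z)\,V_1\ba{V_2}\big)+|V_2|^2 .
\]

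The key observation is that the portion of the right-hand side absent from \eqref{E:leviform} is a perfect square:
\[
|\partial_z f(z)|^2|V_1|^2-2\rl\big(\partial_z f(z)\,V_1\ba{V_2}\big)+|V_2|^2=\big|\,V_2-\partial_z f(z)\,V_1\,\big|^2\ \geq\ 0 .
\]
Dropping this nonnegative term gives \eqref{E:leviform} at once. For the ``in particular'' assertion: if $f=h+R$ with $h\in{\sf harm}(\OM)$, then $\partlD{z \zbar}{2}h=\tfrac14\Delta h\equiv 0$, so $\partlD{z \zbar}{2}f=\partlD{z \zbar}{2}R$; substituting this and $\partial_\zbar f=\partial_\zbar(h+R)$ into \eqref{E:leviform} produces \eqref{E:levi-harmonic}.

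I do not expect a genuine obstacle here: the content is elementary. The only points needing care are keeping the Wirtinger derivatives and complex conjugates straight while expanding $\partlD{z \zbar}{2}\psi$, and spotting the completion of the square among the remaining terms --- once the Levi form is written out explicitly, the inequality is immediate.
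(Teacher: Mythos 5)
Your proposal is correct and follows essentially the same route as the paper's proof: compute the mixed second-order derivatives of $\psi$, rewrite the Levi form so that the terms involving $|\partial_z f|^2$, the cross term, and $|V_2|^2$ combine into the perfect square $|\partial_z f(z)V_1-V_2|^2$, and discard it; the harmonic case then follows from $\partlD{z \zbar}{2}h=0$. No issues.
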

\begin{proof}
 We compute:
\begin{align}
\partlD{z \zbar}{2}\psi(z,w) \ &= \ 2\rl(\partlD{z \zbar}{2}f(z).(\ba{f(z)-w}))+ |\partial_{z}f(z)|^2
                                 + |\partial_{\zbar}f(z)|^2,\notag\\
\partlD{z \ba{w}}{2}\psi(z,w) \ &= \ -\partial_{z}f(z), \notag\\
\partlD{w \ba{w}}{2}\psi(z,w) \ &= \  1.\notag
\end{align}
Now, using the above calculation, we have the Levi form $\mathscr{L}\psi (z,w;V)$, with
$V=(V_1, V_2) \in \CC$, as (we denote the function mapping $(z,w)\mapp w$ by $w$):
\begin{align}
 \mathscr{L}\psi(\bcdot;V)&= \left(2\rl (\partlD{z \zbar}{2}f\bcdot(\ba{f-w}))+ |\partial_{z}f|^2+
	|\partial_{\zbar}f|^2 \right)|V_1|^2 - 2\rl(\partial_{z}f\bcdot V_1 \ba{V_2})+ |V_2|^2 \notag\\
&= |\partial_{z}f\bcdot V_1-V_2|^2+ \left(2\rl(\partlD{z \zbar}{2}f\bcdot(\ba{f-w}))+
	|\partial_{\zbar}f|^2 \right)|V_1|^2 \notag\\
&\geq \left( 2\rl(\partlD{z \zbar}{2}f \bcdot (\ba{f-w}))+ |\partial_{\zbar}f|^2 \right)|V_1|^2. \notag
\end{align}
The second inequality follows by replacing $f$ by $h+R$ and noting that $\partlD{z \zbar}{2}h=0$.
\end{proof}
\smallskip

We now present the following variation on \cite[Theorem 1]{W}. We need to clarify some notation
needed in its proof: given a compact subset $K\Subset \cplx^d$, we define
\[
\poly(K) \ := \ \text{the class of uniform limits on $K$ of holomorphic polynomials in $\cplx^d$.}
\] 

\begin{theorem}[A variation on Theorem~1 of \cite{W}]\label{T:Wermer}
Let $f:\ba{\Dsc}\lrarw\cplx$ be a continuous function. Assume $\Gr_{\ba{\Dsc}}(f)$ is polynomially
convex. Define
\begin{align}
 \mathcal{S}:=\{z\in\Dsc: \, &\text{there exists a $\cplx$-open neighbourhood $V_z\ni z$ such that} 
 \notag \\
			&\text{$f$ has continuous first-order partial derivatives on $V_z$}\}, \notag
\end{align}
and let $W:=\{z\in\mathcal{S}:\partlD{\zbar}{}f(z)\neq 0\}$. If $\ba{\Dsc}\setminus W$ has zero
Lebesgue measure, then $[z,f]_{\ba{\Dsc}}=\smoo(\ba{\Dsc})$.
\end{theorem}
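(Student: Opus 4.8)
The plan is to transport the problem onto the graph and then reduce it, by classical facts, to a single $\bar\partial$-approximation in $\CC$. Write $\Sigma:=\Gr_{\ba{\Dsc}}(f)$ and let $\pi\colon\Sigma\to\ba{\Dsc}$, $(z,w)\mapsto z$, be the canonical projection; it is a homeomorphism since $\Sigma$ is a graph, so $\pi^{*}$ is an isometric algebra isomorphism $\smoo(\ba{\Dsc})\to\smoo(\Sigma)$, and because the function $(z,w)\mapsto f(z)$ on $\Sigma$ is just the coordinate function $w$, $\pi^{*}$ carries $[z,f]_{\ba{\Dsc}}$ onto $\poly(\Sigma)$. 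Hence it suffices to prove $\poly(\Sigma)=\smoo(\Sigma)$. The coordinate functions $z$ and $w$ lie in $\poly(\Sigma)$, and $z$ already separates the points of $\Sigma$, so by the Stone--Weierstrass theorem the closed subalgebra of $\smoo(\Sigma)$ generated by $z$ and $\ba{z}$ is all of $\smoo(\Sigma)$; it is therefore enough to show that $(z,w)\mapsto\ba{z}$ lies in $\poly(\Sigma)$. Finally, $\Sigma$ is compact and polynomially convex, so the Oka--Weil theorem identifies $\poly(\Sigma)$ with the uniform closure on $\Sigma$ of the restrictions to $\Sigma$ of functions holomorphic on open neighbourhoods of $\Sigma$ in $\CC$. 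Thus the theorem reduces to the claim: \emph{for every $\eps>0$ there are an open $\Omega\supset\Sigma$ in $\CC$ and an $F\in\hol(\Omega)$ with $\sup_{\Sigma}|F(z,w)-\ba{z}|<\eps$.}

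To construct such an $F$ I would exploit that over $W$ the graph is a totally real $\smoo^{1}$ surface. Fix a disc $B\Subset W$; on $B$ the function $f$ is $\smoo^{1}$ with $\partial_{\zbar}f\neq0$, and a mollification $f_{\dl}$ of $f$ (carried out on a slightly smaller disc) is smooth and, for $\dl$ small, still has $\partial_{\zbar}f_{\dl}\neq0$ with $\Gr_{B}(f_{\dl})$ close to $\Gr_{B}(f)$ in $\smoo^{1}$. For $\psi_{\dl}(z,w):=|w-f_{\dl}(z)|^{2}$ the computation in the proof of Lemma~\ref{L:leviform} shows that, \emph{on} $\Gr_{B}(f_{\dl})$ --- where the term $2\rl((\ba{f_{\dl}(z)-w})\partial_{z}\partial_{\zbar}f_{\dl}(z))$ appearing in \eqref{E:leviform} vanishes because $\ba{f_{\dl}(z)-w}=0$ --- one has $\mathscr{L}\psi_{\dl}(z,w;V)=|\partial_{z}f_{\dl}(z)V_{1}-V_{2}|^{2}+|\partial_{\zbar}f_{\dl}(z)|^{2}|V_{1}|^{2}$ for $V=(V_{1},V_{2})\in\CC$, which is positive definite since $\partial_{\zbar}f_{\dl}\neq0$; hence $\psi_{\dl}$ is strictly plurisubharmonic on a neighbourhood of $\Gr_{B}(f_{\dl})$ and vanishes precisely there. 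Using total reality over $W$, one extends $\ba{z}|_{\Sigma}$ to a smooth function $\chi$ on a neighbourhood of $\Sigma$ with $\bar\partial\chi$ vanishing on the totally real part $\Gr_{W}(f)$, while over $\Gr_{\ba{\Dsc}\setminus W}(f)$ one simply takes $\chi=\ba{z}$; thus on all of $\Sigma$ the continuous form $\bar\partial\chi$ vanishes off $\Gr_{\ba{\Dsc}\setminus W}(f)$. Since $\Sigma$ is polynomially convex it admits a pseudoconvex neighbourhood $\Omega$ (e.g.\ a suitable polydisc), and one solves $\bar\partial u=\bar\partial\chi$ on $\Omega$ with $\sup_{\Sigma}|u|\to0$ by Hörmander's $L^{2}$ estimate, using a plurisubharmonic weight assembled from $t\,\psi_{\dl}$ near $\Gr_{W}(f)$ and from a strictly plurisubharmonic function negative on $\Sigma$ elsewhere, with $t\to\infty$: on the right-hand side of that estimate $\bar\partial\chi$ is uniformly small near $\Gr_{W}(f)$, while a neighbourhood of $\Gr_{\ba{\Dsc}\setminus W}(f)$ of the form $\{(z,w):z\in O,\ |w-f(z)|<\eps'\}$ --- with $O\supset\ba{\Dsc}\setminus W$ open of small area --- has volume at most $\pi(\eps')^{2}\,\mathrm{area}(O)$, hence is negligible. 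Then $F:=\chi-u\in\hol(\Omega)$ and $\sup_{\Sigma}|F(z,w)-\ba{z}|\le\sup_{\Sigma}|\chi-\ba{z}|+\sup_{\Sigma}|u|$ is as small as we please. With $R\equiv0$ this is exactly Wermer's situation, which is why we regard the theorem as a variation on \cite[Theorem~1]{W}.

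The step I expect to be the main obstacle is making the $\bar\partial$-argument of the previous paragraph rigorous under the weak hypotheses at hand --- $f$ only locally $\smoo^{1}$ on the open set $\mathcal{S}\supset W$, with $\ba{\Dsc}\setminus W$ merely Lebesgue-null, rather than $f\in\smoo^{1}(\ba{\Dsc})$ as in \cite{W}. Three points need care: (i) the $\bar\partial$-solve must be performed after a mollification, and one must track how the constants degrade as the mollification parameter tends to $0$, since on $W$ there is no control on the modulus of continuity of $\partial_{\zbar}f$; (ii) one must produce a single extension $\chi$ defined near \emph{all} of $\Sigma$, including over $\bdy\Dsc$ and over $\ba{\Dsc}\setminus W$ where $f$ need not be differentiable, with $\bar\partial\chi$ bounded and vanishing on $\Gr_{W}(f)$; and, above all, (iii) one must bound the contribution of $\bar\partial\chi$ coming from a neighbourhood of $\Gr_{\ba{\Dsc}\setminus W}(f)$ in the weighted $L^{2}$ norm --- and this is precisely the place where hypothesis $(a)$, that $\ba{\Dsc}\setminus W$ has zero Lebesgue measure (so that thin $w$-slabs over small-area sets have small volume in $\CC$), is used. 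Everything else --- the reductions of the first paragraph via Stone--Weierstrass and Oka--Weil, and the formal layout of the Hörmander--Wermer construction --- is routine, and with $R\equiv0$ the argument collapses to Wermer's.
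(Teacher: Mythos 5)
Your opening reductions are fine: identifying $[z,f]_{\ba{\Dsc}}$ with $\poly(\Sigma)$ via the graph, invoking Stone--Weierstrass to reduce to showing $\ba{z}\in\poly(\Sigma)$, and using Oka--Weil plus polynomial convexity to reduce further to approximating $\ba{z}$ on $\Sigma$ by functions holomorphic near $\Sigma$. But everything after that is a plan rather than a proof, and the obstacles you yourself flag are not removable technicalities --- at least one is fatal to the scheme as described. The H{\"o}rmander--Wermer machinery needs \emph{uniform} control: a lower bound for the complex Hessian of the weight on a full neighbourhood of the totally real part of $\Sigma$ (this is what enters the constant in the $L^{2}$ estimate), and a uniform rate of vanishing of $\bar\partial\chi$ along that part (this is what makes $\int|\bar\partial\chi|^{2}e^{-t\psi}$ decay as $t\to\infty$). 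Here neither is available: on $W$ you know only that $f$ is locally $\smoo^{1}$ with $\partial_{\zbar}f\neq 0$ pointwise, so $|\partial_{\zbar}f|$ may tend to $0$ and the local $\smoo^{1}$ norms of $f$ may blow up as one approaches $\bdy W$, and $\bdy W$ can accumulate on every point of the null set $\ba{\Dsc}\setminus W$. Consequently the strict plurisubharmonicity of $\psi_{\dl}$ degenerates, no single mollification parameter $\dl$ works, and the interface between the region where $\chi$ is the totally-real extension of $\ba{z}$ and the region where $\chi=\ba{z}$ carries a contribution to $\bar\partial\chi$ that is not controlled by the measure of $\ba{\Dsc}\setminus W$ alone. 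You also never address how to pass from weighted $L^{2}$ smallness of $u$ to $\sup_{\Sigma}|u|$ small, which over the non-smooth portion of the graph is not a routine sub-mean-value argument. So the central claim --- existence of $F\in\hol(\Omega)$ with $\sup_{\Sigma}|F-\ba{z}|<\eps$ --- is left unproved.

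The paper takes an entirely different, dual route, following Wermer: for a measure $\mu\perp[z,f]_{\ba{\Dsc}}$ one shows that the Cauchy transform $H_\mu(a)=\int(z-a)^{-1}d\mu(z)$ vanishes for every $a$ outside $(\ba{\Dsc}\setminus W)\cup\widetilde{E}$, where $\widetilde{E}$ is the null set on which $\int|z-a|^{-1}d|\mu|=\infty$; Bishop's lemma ($H_\mu=0$ a.e.\ $\impl$ $\mu=0$) then finishes. The point is that the hard construction (Wermer's Oka--Weil argument producing $h$, $h_1$ and the polynomials $P_n$) is needed only \emph{at individual points} $a\in W$, where the local $\smoo^{1}$ regularity and $\partial_{\zbar}f(a)\neq 0$ are exactly what is hypothesised, and the exceptional set is absorbed by the ``almost everywhere'' in Bishop's lemma. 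If you wish to pursue a constructive $\bar\partial$ route you would essentially be reproving Chirka-type approximation theorems, and you would need a genuinely new idea to handle the degeneration near $\bdy W$.
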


\begin{remark} Wermer's original result requires that $f\in\smoo^{1}(\ba{\Dsc})$ and that
$\partlD{\zbar}{}f$ be non-vanishing. But immediately after the proof of \cite[Theorem~1]{W}, 
it is stated that the hypothesis of \cite[Theorem~1]{W} can 
be weakened by letting the condition $\partlD{\zbar}{}f(z)\neq 0$ 
fail on a non-empty subset of zero Lebesgue measure. The key to Theorem~\ref{T:Wermer}
is that $f$ can also be allowed to be non-differentiable on this exceptional set. We
justify this below.
\end{remark}  

\noindent{{\em Sketch of the proof of Theorem~\ref{T:Wermer}.}
Since the proof involves minor modifications to the original, we shall be brief.
The notation $\mu\perp [z,f]_{\ba{\Dsc}}$ will denote a complex measure 
$\mu\in\smoo(\ba{\Dsc})^\star$ representing a bounded linear functional on $\smoo(\ba{\Dsc})$ that
annihilates $[z,f]_{\ba{\Dsc}}$.}
\smallskip

Wermer's proof uses the following fact, which occurs as a part of Bishop's proof 
of \cite[Theorem~4]{B}:
\smallskip 

\begin{itemize}
\item[$(*)$] (Bishop) {\em For any complex measure $\mu\in\smoo(\ba{\Dsc})^\star$, let
\[
 H_\mu(a) \ := \ \int_\cplx \frac{1}{z-a}d\mu(z).
\]
If $H_\mu=0$ $\leb$-a.e., then $\mu=0$ ($\leb$ denotes the planar Lebesgue measure on $\cplx$).}
\end{itemize}

\noindent{Wermer's strategy consists of the following two parts:
\begin{enumerate}
\item[(a)] Use the Oka-Weil theorem
to construct, for each $a\in\ba{\Dsc}$, an open neighbourhood 
$\dom\supset \Gr_{\ba{\Dsc}}(f)$ and a function 
$h\in \hol(\dom)$ (which depend on $a$) such that:
\begin{itemize}
\item $\exists R>0$ such that $h(\Gr_{\ba{\Dsc}}(f))\subset E_R\cup \{0\}$, where $E_R$ denotes
the complement in $\cplx$ of the closed sector $\{re^{i\tht}:0\leq r\leq R, \ |\tht|\leq \pi/4\}$.
\item $h$ is non-vanishing on $\Gr_{\ba{\Dsc}}(f)\setminus\{(a,f(a))\}$.
\item $\exists h_1\in \hol(\dom)$ satisfying $h(z,w)=(z-a)h_1(z,w) \; \forall (z,w)\in\dom$.
\end{itemize}

\item[(b)] Apply (for a fixed $a\in\ba{\Dsc}$) the dominated convergence theorem to the
measure $\mu\perp [z,f]_{\ba{\Dsc}}$ and the sequence 
\[
 \left\{h_1(\bcdot \, ,f)P_n\circ h(\bcdot \, ,f)\right\}_{n\in\zahl_+}
 \subset\poly(\ba{\Dsc})
\]
(where the sequence $\{P_n\}_{n\in\zahl_+}$ is as given by \cite[Lemma~3]{W}) to conclude
that $H_\mu(a)=0$. This implies $H_\mu\equiv 0$ because the above argument works for each
$a\in\ba{\Dsc}$. 
\end{enumerate}}

\noindent{Since $H_\mu\equiv 0$ for {\em every} $\mu\perp [z,f]_{\ba{\Dsc}}$, 
it follows from $(*)$ that $[z,f]_{\ba{\Dsc}}=\smoo({\ba{\Dsc}})$.}
\smallskip

Observe that the inference $H_\mu\equiv 0$ (for $\mu\perp [z,f]_{\ba{\Dsc}}$) is 
stronger than is necessary for the desired conclusion. This suggests the 
following {\em modified} two-step strategy:
\begin{itemize}
\item[(a${}^\prime$)] Construct the objects $(\dom, h, E_R, h_1)$ having exactly
the same properties as in 
Part~(a) above, but {\em only associated to each $a\in W$}.

\item[(b${}^\prime$)] Repeat Part~(b) of Wermer's strategy for all those points
$a\in \ba{\Dsc}$ for which Wermer's dominated-convergence-theorem argument,
showing $H_\mu(a)=0$, still makes sense (call the complement of all such points $\mathcal{E}$).
\end{itemize}

\noindent{It is not hard to see that 
$\mathcal{E}=(\ba{\Dsc}\setminus W)\cup (\widetilde{E}\cap \ba{\Dsc})$, where:
\[
 \widetilde{E}:=\left\{a\in\cplx: \int_\cplx |z-a|^{-1}d|\mu|(z)=\infty\right\}.
\]
The set $\widetilde{E}$ has zero Lebesgue measure, which is a well-known fact about
finite, positive Borel measures in general. By {\em exactly the same} considerations as 
in Part~(b) --- and from the obvious
fact that $\mu\perp [z,f]_{\ba{\Dsc}}\impl H_\mu(a)=0 \ \forall a\notin \ba{\Dsc}$ --- our
modified strategy gives us}
\[
\mu\perp [z,f]_{\ba{\Dsc}} \; \; \impl \; \; H_\mu(a)=0 \; \forall a\in 
\cplx\setminus \mathcal{E}.
\]
Since, by hypothesis, $\leb(\mathcal{E})=0$, we infer from $(*)$ that each
$\mu\perp [z,f]_{\ba{\Dsc}}$ is just the zero measure.  
Hence, $[z,f]_{\ba{\Dsc}}=\smoo(\ba{\Dsc})$. \hfill $\Box$
\medskip

\section{The proof of Theorem \ref{T:genlaxler-shields}}\label{S:genlaxler-shields}

We recall a standard notation that we shall use in our proof. Given a domain 
$\OM\subset \cplx^d$ and a compact subset $K\Subset \OM$, we define the 
{\em $\hol(\OM)$-hull of $K$} as
\[
\hull{K}_\OM \ := \ \{z\in\OM: |f(z)|\leq \sup\nolimits_{K}|f| \; \forall f\in\hol(\OM)\}.
\]

\begin{proof}[Proof of Theorem~\ref{T:genlaxler-shields}]
We begin with a preliminary observation. The estimate \eqref{E:cond2} may be rewritten as
\[
 |\Delta R(z)| \ \leq \
\frac{|\partial_{\zbar}(h+R)(z)|^2}{\sup_{\ba{\Dsc}}|h+R|+\left(\frac{1}{C}-1\right)\sup_{\ba{\Dsc}}|h+R|},
\]
whence we can certainly find a constant $\delta_0>0$ such that
\[
\sup_{\ba{\Dsc}}|h+R|+\left(\tfrac{1}{C}-1\right)\sup_{\ba{\Dsc}}|h+R|
\ \geq \ \sup_{\ba{\Dsc}}|h+R|+\delta_0.
\]
Hence, for the remainder of this proof, we may assume that
\begin{equation}\label{E:cond1}
 |\Delta R(z)| \ \leq \
\frac{|\partial_{\zbar}(h+R)(z)|^2}{\sup_{\ba{\Dsc}}|h+R|+\delta_0}~~~~\forall z\in\Dsc.
\end{equation}

For each $r\in (0,1)$, let us define
\[
 \psi_r(z,w):=|w-(h+R)(rz)|^2, ~~~(z,w) \in\ba{D(0;r^{-1})}\times \cplx.
\]
The Levi-form computations \eqref{E:leviform} and \eqref{E:levi-harmonic}, taken together
with the the estimate \eqref{E:cond1} on $\Delta R$, establish that
\begin{equation}\label{E:psh}
\psi_r \ \text{\em is plurisubharmonic in $\boldsymbol{\Delta}_r:=D(0;r^{-1})\times D(0;M+2\delta_0)$
$\forall r\in (0,1)$},
\end{equation}
where $M:= \sup_{\ba{\Dsc}}|h+R|$. Given these preliminaries, we can complete the proof in
two steps.
\smallskip

\noindent{{\bf Step I:} {\em Polynomial convexity of $\Gr_{\ba{\Dsc}}(h+R)$.}}
\smallskip

\noindent{Since $(h+R)$ is uniformly continuous on $\ba{\Dsc}$, it follows that:
\begin{align}
\text{\em For each $\eps>0$}, \ \exists&\delta(\eps)>0 \ \text{\em such that} \notag \\
0<(1-r) \ \leq \ &\delta(\eps) \ \Longrightarrow \ |(h+R)(rz)-(h+R)(z)| < \eps \;\; 
\forall z\in \ba{\Dsc}. \label{E:trap}
\end{align}
Consider a point 
$p=(z_0,w_0)\in \ba{\Dsc}\times D(0;M+2\delta_0)\setminus \Gr_{\ba{\Dsc}}(h+R)$. Then,
by definition, $\psi_1(z_0,w_0)=:\eps_p^2>0$. Write $r(p):=1-\delta(\eps_p/3)$, where
$\delta(\eps_p/3)$ is as given by \eqref{E:trap}. Then, \eqref{E:trap} tells us:
\begin{align}
|w_0-(h+R)(r(p)z_0)|  &\geq  |w_0-(h+R)(z_0)|-|(h+R)(z_0)-(h+R)(r(p)z_0)| \notag\\
			    &>  2\eps_p/3, \notag\\ 
\psi_{r(p)}(z,w)  &=  |(h+R)(z)-(h+R)(r(p)z)|^2 < \eps_p^2/9 \;\;
\forall (z,w)\in \Gr_{\ba{\Dsc}}(h+R). \notag
\end{align}    
By the last two estimates, we have just shown that 
\begin{equation}\label{E:sep}
\psi_{r(p)}(p) > 4\eps_p^2/9 \; \; \; \text{and} \; \; \; \psi_{r(p)}(z,w)\ < \eps_p^2/9 \;
\forall (z,w)\in \Gr_{\ba{\Dsc}}(h+R).
\end{equation}
Let us now write $K:=\Gr_{\ba{\Dsc}}(h+R)$. We claim that $p\notin\hull{K}$. To do so,
we invoke a well-known result of H\"{o}rmander~\cite[Theorem~4.3.4]{H} which states that if 
$\OM\subset\cplx^d, \ d\geq 2$, is a pseudoconvex domain and $K\Subset \OM$ is a compact
subset, then the hull $\hull{K}_\OM$ can also be expressed as:
\[
\hull{K}_\OM \ = \ \{z\in \cplx^d : U(z) \leq  \sup\nolimits_{K}U \;\;
\forall U\in {\sf psh}(\OM)\}.
\]
It thus follows from \eqref{E:sep} that
$p\notin \hull{K}_{\boldsymbol{\Delta}_{r(p)}}$. Note that each $\boldsymbol{\Delta}_{r(p)}$ 
is Runge. We know therefore that $\hull{K}_{\boldsymbol{\Delta}_{r(p)}}=\hull{K}$.
Since $p$ was arbitrarily chosen, we have just shown that
\[
p\notin \hull{K} \; \; \forall p\in \ba{\Dsc}\times D(0;M+2\delta_0)\setminus K.
\]
Of course, it is easy to see that no point in $\CC\setminus (\ba{\Dsc}\times\ba{D(0;M)})$ 
can belong to $\hull{K}$. Hence, $K=\Gr_{\ba{\Dsc}}(h+R)$ is polynomially convex.}
\smallskip

\noindent{\bf Step II:} {\em Completing the proof.}
\smallskip

\noindent{We appeal to Theorem~\ref{T:Wermer} with $(h+R)$ playing the role of $f$. In the terminology
of Theorem~\ref{T:Wermer}
\[
\ba{\Dsc}\setminus W \ = \ \bdy\Dsc\cup \{z\in \Dsc: \partlD{\zbar}{}(h+R)(z)=0\},
\]
which, by hypothesis, has zero Lebesgue measure. We have already established that
$\Gr_{\ba{\Dsc}}(h+R)$ is polynomially convex. Thus, $(h+R)$ satisfies all the conditions 
stated in Theorem~\ref{T:Wermer}, and we conclude that 
$[z,h+R]_{\ba{\Dsc}}= \smoo (\ba{\Dsc})$.}
\end{proof}
\medskip

\noindent{{\bf Acknowledgements.} We thank the referee of this article for pointing out 
ways in which the arguments in our first draft of this article could be simplified, and for
suggesting that we examine whether the boundary regularity of the functions in 
Theorem~\ref{T:genlaxler-shields} could be lowered.}
\bigskip

\end{document}